\documentclass[12pt]{amsart}

\usepackage{setspace,amsfonts,amsthm,amssymb,amsmath,comment,fancyhdr,commath}
\usepackage[mathscr]{euscript}
\usepackage[utf8]{inputenc}
\usepackage[T1]{fontenc}

\newtheorem{theorem}{Theorem}[section]

\newtheorem{prop}[theorem]{Proposition}

\theoremstyle{definition}

\theoremstyle{remark}

\newcommand\mR{{\mathbb R}}

\newcommand\mN{{\mathbb N}}

\newcommand\restr{\mathord{\restriction}}

\newcommand\B{{\mathscr B}}

\newcommand\Cia{{\mathscr C}}
\newcommand\cn{\colon}
\newcommand\om{\Omega}

\newcommand\mn{\leqslant}

\newcommand\ol{\overline}
\newcommand\al{\alpha}

\newcommand\fs{F_\sigma}
\newcommand\gd{G_\delta}

\numberwithin{equation}{section}


\begin{document}

\title[Kuratowski Extension Theorem for Baire-alpha functions]{An approximation form of the Kuratowski Extension Theorem for Baire-alpha functions}

\author{Waldemar Sieg}
\curraddr{Institute of Mathematics, Kazimierz Wielki University,
Powsta\'nc\'ow Wielkopolskich 2,
85-090 Bydgoszcz, Poland}
\email{waldeks@ukw.edu.pl}

\subjclass[2020]{Primary 26A15, 41A30, 54C20, Secondary 26A21, 54C30}

\date{}

\keywords{Baire one function, Positive extension, Approximation, Perfectly normal space, Tietze extension theorem, Kuratowski extension theorem}

\begin{abstract}
Let $\om$ be a perfectly normal topological space, let $A$ be a non-empty $\gd$-subset of $\om$ and let $\B_1(A)$ denote the space of all functions $A\to\mR$ of Baire-one class on $A$.
Let also $\|\cdot\|_\infty$ be the supremum norm. The symbol $\chi_A$ stands for the characteristic function of $A$. We prove that for every bounded function $f\in\B_1(A)$ there is a sequence $(H_n)$
of both $\fs$- and $\gd$-subsets of $\om$ such that the function $\ol{f}\cn\om\to\mR$ given by the uniformly convergent series on $\om$ with the formula:
$
\ol{f}:=c\sum_{n=0}^\infty\left(\frac{2}{3}\right)^{n+1}\left(\frac{1}{2}-\chi_{H_n}\right)
$
extends $f$ with $\ol{f}\in\B_1(\om)$, $c=\sup_{x\in\om}\abs{\ol{f}(x)}$ and the condition ($\triangle$) of the form:
$\|f\|_\infty=\|\ol{f}\|_\infty$.
We apply the above series to obtain an extension of $f$ positive to $\ol{f}$ positive with the condition ($\triangle$). A similar technique allows us to obtain an extension of Baire-alpha function
on $A$ to Baire-alpha function on $\om$.
\end{abstract}

\maketitle

\section{Notation}
Let $\om$, $A$, $\chi_A$, $\|\cdot\|_\infty$ and $\B_1(A)$ have the same meanings as in the abstract.
If $f\in\B_1(A)$ and $B\subset A$ then $f_{\restr B}$ denotes the restriction
of $f$ to $B$. Recall that $\fs(\om)$ ($\gd(\om)$, respectively) is the family of countable
sums of closed subsets (countable intersections of open subsets, respectively) of $\om$.
Note that $\B_0(A)=\Cia(A)$, the space of all continuous functions on $A$. $\B_\al(A)$ is the space of pointwise limits of sequences in $\bigcup_{\xi<\al}\B_\xi(A)$ (for more details in that field see \cite{Jay1,Jay2}).
The symbol $\B_\al^{bd}(A)$ stands for the family of bounded elements of $\B_\al(A)$.
The following theorem is a special case of a more general result (it will be given in Section 4 of this paper) which is crucial for our future considerations. It gives a characterization of Baire-one functions in terms of $\fs$-sets.

\begin{prop}(Lebesgue-Hausdorff, \cite[p.393]{Kuratowski2})\label{prop1}
Let $\om$ be a perfectly normal topological space and let a function $f\cn\om\to\mR$ be bounded.
Then $f\in\B_1(\om)$ if and only if $f^{-1}(F)\in\gd(\om)$ for every closed subset $F$ of $\mR$.
\end{prop}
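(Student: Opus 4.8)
The plan is to prove the two implications separately; throughout, I take the descriptive condition in the form ``$f^{-1}(U)\in\fs(\om)$ for every open $U\subseteq\mR$'', equivalently ``$f^{-1}(F)\in\gd(\om)$ for every closed $F$'' (the two versions correspond by passing to complements).

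For the necessity, I would assume $f=\lim_n f_n$ pointwise with every $f_n\in\Cia(\om)$ and fix an open $U\subseteq\mR$. Since $\mR$ is metric, write $U=\bigcup_m C_m$ with each $C_m$ closed and $C_m\subseteq\wn C_{m+1}$, so that also $U=\bigcup_m\wn C_m$. Setting $E_m:=\bigcup_N\bigcap_{n\ge N}f_n^{-1}(C_m)$ -- an $\fs$-set, since each $f_n^{-1}(C_m)$ is closed -- the key point is the double inclusion $f^{-1}(\wn C_m)\subseteq E_m\subseteq f^{-1}(C_m)$: the left inclusion holds because $\wn C_m$ is open and $f_n(x)\to f(x)$, the right one because $C_m$ is closed. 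As $\bigcup_m f^{-1}(\wn C_m)=f^{-1}(U)=\bigcup_m f^{-1}(C_m)$, the $\fs$-set $\bigcup_m E_m$ is squeezed between two copies of $f^{-1}(U)$, forcing $f^{-1}(U)=\bigcup_m E_m\in\fs(\om)$; boundedness of $f$ is not used here.

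For the sufficiency, let $f$ be bounded, $f(\om)\subseteq[-M,M]$, with $f^{-1}(F)\in\gd(\om)$ for every closed $F$. I would reduce ``$f\in\B_1(\om)$'' to three ingredients. \emph{(i) Reduction to ambiguous characteristic functions.} Given a finite grid $a_0<\dots<a_p$ covering $[-M,M]$ with mesh $<\eps$, the level sets form a tower $\{f\le a_0\}\subseteq\{f<a_1\}\subseteq\{f\le a_1\}\subseteq\dots\subseteq\{f<a_p\}$ in which each $\{f\le a_i\}$ is $\gd$ and each $\{f<a_i\}$ is $\fs$; interpolating nested sets $E_i$ that are simultaneously $\fs$ and $\gd$ (``ambiguous'') between $\{f\le a_{i-1}\}$ and $\{f<a_i\}$ produces $g:=\sum_i a_i\,\chi_{E_{i+1}\setminus E_i}$ with $\|f-g\|_\infty<2\eps$ and ambiguous level sets. \emph{(ii) Ambiguous characteristic functions are Baire-one.} Writing $E=\bigcup_n F_n$ and $\om\setminus E=\bigcup_n G_n$ with $F_n,G_n$ closed and increasing, the sets $F_n,G_n$ are disjoint, so Urysohn supplies continuous $h_n\cn\om\to[0,1]$ with $h_n\restr F_n\equiv1$ and $h_n\restr G_n\equiv0$, and $h_n\to\chi_E$ pointwise; since finite linear combinations of Baire-one functions are Baire-one, every $g$ from (i) lies in $\B_1(\om)$. \emph{(iii) Uniform limits of bounded Baire-one functions are Baire-one.} Taking $\eps=2^{-k}$ in (i) gives $g_k\in\B_1(\om)$ with $\sum_k\|g_{k+1}-g_k\|_\infty<\infty$; truncating the continuous approximants of the increments $g_{k+1}-g_k$ at height $\|g_{k+1}-g_k\|_\infty$ and diagonalizing produces continuous functions converging pointwise to $f$.

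I expect the main obstacle to be the interpolation in (i) -- wedging the tower of level sets between nested ambiguous sets -- since this is precisely the $\gd$-separation (reduction) property, and it is where the ambient topology genuinely enters: the interpolation is classical for metric spaces and is available under the perfect normality carried through the rest of the paper, whereas complete regularity alone (and likewise the appeal to Urysohn in (ii)) would require extra care. The remaining points -- the squeeze in the necessity direction, the mesh estimate, closure of $\B_1$ under finite linear combinations, and the diagonal argument in (iii) -- are routine.
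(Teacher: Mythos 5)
The paper offers no proof of this proposition at all---it is quoted from Kuratowski---so there is nothing of the author's to compare your argument against; what you have written is the classical Lebesgue--Hausdorff proof, and in outline it is sound for the perfectly normal setting in which the paper actually uses the result. One point, however, must not be smoothed over: your opening ``reformulation'' is not an equivalence but a correction of the printed statement. The condition ``$f^{-1}(F)\in\fs(\om)$ for every closed $F\subseteq\mR$'' is \emph{not} obtained from ``$f^{-1}(U)\in\fs(\om)$ for every open $U$'' by passing to complements (that would give ``$f^{-1}(F)\in\gd(\om)$''), and as printed the necessity direction is false: Thomae's function $T$ on $[0,1]$ (with $T(p/q)=1/q$ and $T=0$ on the irrationals) is bounded and upper semicontinuous, hence of Baire class one, yet $T^{-1}(\{0\})$ is the set of irrationals, a $\gd$ set that is not $\fs$. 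The version you prove---preimages of open sets are $\fs$, equivalently preimages of closed sets are $\gd$---is the correct one; it is also the version the paper tacitly uses (in the proof of Theorem~\ref{MR} the proposition is invoked to conclude that $M$ and $N$ are $\gd$-subsets of $A$) and the one that agrees with Theorem~\ref{Leb_Haus} specialized to $\al=1$. You should state explicitly that you are proving the corrected statement, not an equivalent rewording.

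With that caveat, your two directions hold up. The squeeze $f^{-1}(\wn C_m)\subseteq E_m\subseteq f^{-1}(C_m)$ in the necessity part is correct and, as you note, uses nothing about $\om$ and not even boundedness. Your diagnosis of the sufficiency part is also accurate: the interpolation of an ambiguous set between a $\gd$ set and an $\fs$ set containing it is precisely Proposition~\ref{prop2} and is available under perfect normality, while the Urysohn step in (ii) needs normality; neither follows from complete regularity alone, under which the correct statement replaces closed sets by zero sets and $\fs$ by countable unions of zero sets (this is exactly what the $Z_\al$ classes of Section 4 accomplish). Since every application of the proposition in the paper assumes $\om$ perfectly normal, your proof covers all the uses. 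The remaining details are routine as you claim; just fix the endpoint conventions $E_0=\emptyset$ and $E_{p+1}=\om$ so that the sets $E_{i+1}\setminus E_i$ genuinely partition $\om$ in step (i).
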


\section{Introduction}
Let $\al$ be an ordinal number such that $0\mn\al<\omega_1$.
We consider the following \emph{Extension Problem}:
\begin{center}
Given $\al$, $A$ and $\om$ as above, does every element $f\in\B_\al(A)$
have an extension $\ol{f}\in\B_\al(\om)$?
\end{center}
So far, all known solutions to the Extension Problem deal with the case $\al$ finite
(of course for $\om$ normal, $A$ closed and $\al=0$, the Tietze theorem is a solution).
The first result in this direction was proved in $1933$ by Kuratowski
\cite[p.434]{Kuratowski1}:
\begin{center}
(K) If $\om$ is a metric space and $A$ is a $\gd$ subset of $\om$ then every element $f\in\B_1(A)$
has an extension $\ol{f}\in\B_1(\om)$.
\end{center}
In the last twenty years a few refinements and generalizations of Kuratowski's theorem
have been proven. In $2003$ Leung and Tang showed \newline\cite[Theorem 3.6]{LeungTang1} that there are extensions $\ol{f}$ preserving the so-called oscillation index of $f$,
whene\-ver $A$ in (K) is closed (cf. \cite{LeungTang2}). On the other hand, in $2005$ Kalenda and Spurn\'y
\cite[Theorem 10(a)]{KalendaSpurny} strengthened (K) with the condition
\begin{description}
\item[$(*)$]
\begin{center}
   $\sup{f(A)}=\sup\ol{f}(\om)$ and $\inf f(A)=\inf\ol{f}(\om)$
\end{center}
\end{description}
and showed \cite[Example 19]{KalendaSpurny} that if in (K) the word ''$\gd$-subset''
is replaced by ''$\fs$-subset'', the implication becomes false, in general
(e.g., for $A=$ the rational numbers of $X=[0,1]$ endowed with the natural metric).
In $2004$ Shatery and Zafarani \cite[Theorem 1.7]{ShatZaf}, using a number of technical
lemmas, obtained a generalization of the Kuratowski theorem (K). They proved that
\begin{center}
\emph{The Extension Problem has a positive answer for $\al$ finite, $\om$ perfectly normal
and a Borel subset $A\subset\om$ of multiplicative class $\al$ (in particular, for $A\in\gd(\om)$).}
\end{center}

During our research we were looking for a brief generalization of the Kuratowski Extension Theorem. It turned out that this can be done by using of the Uryshon's method of proof and the Lebesgue-Hausdorff Theorem with the assumption that $\om$ is perfectly normal. The investigation gave us an explicit extension formula for every bounded function $f\in\B_\al(A)$. For simplicity, in Section 3 we present a full proof for the case $\al=1$ only
(Theorem \ref{MR}). The justification for $1<\al<\omega_1$ is analogous (we write about it in Section 4).

\section{Extensions of Baire-one functions}\label{EXT}
The proof of the main theorem of this section is based on two auxiliary results. First of them
is (the Lebesgue-Hausdorff) Proposition \ref{prop1}. The second one is the following special case of the well-known Separation Theorem. It allows to separate two disjoint $\gd$-subsets of a perfectly normal topological space $\om$ by the set which is $\fs$ and $\gd$ in $\om$. In the cited Srivastava's book \cite{Srivastava} the result is formulated for a metrizable space, but simple transformations allows us to state that it is also true for a domain that is perfectly normal.

\begin{prop}(\cite[Theorem 3.6.11]{Srivastava})\label{prop2}
Let $M,N$ be two (non-empty) disjoint $\gd$-subsets of a perfectly normal topological space $\om$. Then there exists a set $H$ of both $\fs$- and $\gd$-subset of $\om$ such that $M\subset H$ and $N\cap H=\emptyset$.
\end{prop}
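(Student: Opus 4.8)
The plan is to reduce the statement to the classical reduction property of the additive class $\fs$, the only non-routine point being that the ``pieces'' produced by the reduction are genuinely $\fs$; this is exactly where perfect normality of $\om$ (not merely normality) is used. First I would record the elementary observation that in a perfectly normal space every \emph{open} set is $\fs$, since its complement is closed, hence $\gd$. Combined with the fact that a finite intersection of $\fs$-sets is $\fs$ in any space, this gives: every finite intersection of closed sets and open sets of $\om$ is $\fs$. This is the only place where perfect normality enters, and it replaces the metrizability hypothesis in Srivastava's version.

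Next I would pass to complements. Put $A:=\om\setminus M$ and $B:=\om\setminus N$; since $M,N$ are $\gd$, both $A$ and $B$ are $\fs$, say $A=\bigcup_n F_n$ and $B=\bigcup_n G_n$ with $F_n,G_n$ closed and, after replacing them by finite unions, increasing in $n$. Since $M\cap N=\emptyset$ we have $A\cup B=\om$, so $A$ and $B$ are two $\fs$-sets covering $\om$. Now I would perform the standard ``whichever of the two sets the point enters first, wins'' construction:
\[
A^{\ast}:=\bigcup_{n}\Bigl(F_{n}\setminus\bigcup_{k\mn n}G_{k}\Bigr),\qquad
B^{\ast}:=\bigcup_{n}\Bigl(G_{n}\setminus\bigcup_{k<n}F_{k}\Bigr).
\]
A short bookkeeping check on the first indices at which a point enters the two sequences shows that $A^{\ast}\cap B^{\ast}=\emptyset$, that $A^{\ast}\cup B^{\ast}=A\cup B=\om$, and that $A^{\ast}\subset A$ and $B^{\ast}\subset B$; and by the opening paragraph each set appearing under the unions is $\fs$, so $A^{\ast}$ and $B^{\ast}$ are $\fs$.

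Finally I would take $H:=B^{\ast}$. It is $\fs$ by construction, and since $A^{\ast}$ and $B^{\ast}$ partition $\om$ we have $H=\om\setminus A^{\ast}$ with $A^{\ast}$ an $\fs$-set, so $H$ is also $\gd$. Moreover $M=\om\setminus A\subset\om\setminus A^{\ast}=H$, and $H=B^{\ast}\subset B=\om\setminus N$, so $N\cap H=\emptyset$; this is precisely the required conclusion. I expect the main obstacle to be exactly the $\fs$-membership of the reduction pieces: a closed set minus an open set need not be closed, and it is only the ``perfect'' half of perfect normality that saves the argument. The secondary point to handle carefully is the index bookkeeping that certifies $A^{\ast},B^{\ast}$ to be a partition of $\om$ with $A^{\ast}\subset A$, $B^{\ast}\subset B$.
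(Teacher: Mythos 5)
Your proof is correct. Note that the paper itself offers no argument for this proposition: it is quoted from Srivastava (Theorem 3.6.11, stated there for metrizable spaces) with only the remark that ``simple transformations'' yield the perfectly normal case. What you have written is essentially the standard proof of the first separation theorem at level two via the reduction property of the class $\fs$, and it correctly supplies the detail the paper leaves implicit. You rightly isolate the one point where perfect normality is indispensable: in such a space every open set is $\fs$ (complements of closed sets, which are $\gd$), so each reduction piece $F_n\setminus\bigcup_{k\mn n}G_k$, being a closed set intersected with an open set, is $\fs$; in a merely normal space this step fails. The index bookkeeping also checks out: with $F_n$, $G_n$ increasing and $m$, $p$ the first indices at which a point enters the two sequences, membership in $A^{\ast}$ amounts to $m<p$ and membership in $B^{\ast}$ to $p\mn m$, so $A^{\ast}$, $B^{\ast}$ do partition $\om=A\cup B$ with $A^{\ast}\subset A$, $B^{\ast}\subset B$, whence $H:=B^{\ast}=\om\setminus A^{\ast}$ is simultaneously $\fs$ and $\gd$, contains $M=\om\setminus A$, and misses $N$. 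This is exactly the argument one would expect behind the citation, and it generalizes verbatim to the higher ambiguous classes needed for Theorem \ref{MRALFA}.
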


We are now ready to formulate our first main result.

\begin{theorem}\label{MR}
Let $\om$ be a perfectly normal topological space and let $A$ be a $\gd$-subset of $\om$. Then every function $f\in\B_1(A)$ has an extension $\ol{f}\in\B_1(\om)$ such that
$\|f\|_\infty=\|\ol{f}\|_\infty$.

More exactly, if $\sup_{x\in A}\abs{f(x)}=c<\infty$, then there exists a sequence
$\left(H_n\right)$ of both $\fs$- and $\gd$-subsets of $\om$, such that the function
$$
\ol{f}(x):=c\cdot\sum_{n=0}^\infty\left(\frac{2}{3}\right)^{n+1}\left(\frac{1}{2}-\chi_{H_n}(x)\right)
~\mathrm{(uniform~convergence~on\ \om)}
$$
extends $f$ with $\sup_{x\in\om}\abs{\ol{f}(x)}=c$.
\end{theorem}
\begin{proof}
As in the Tietze theorem, it is enough to consider the case $c=1$.
Put $M=f^{-1}\left(\left[-1,-\frac{1}{3}\right]\right)$ and
$N=f^{-1}\left(\left[\frac{1}{3},1\right]\right)$. By Proposition \ref{prop1},
both the (disjoint) sets $M$ and $N$ are $\gd$-subsets of $A$. Since $\gd(A)=A\cap\gd(\om)$
and $A\in\gd(\om)$, we have $M,N\in\gd(\om)$. By Proposition \ref{prop2},
there is a set $H_0$ of both $\fs$- and $\gd$-subset of $\om$
such that $M\subset H_0$ and $N\cap H_0=\emptyset$. By Proposition \ref{prop1} again, the characteristic function $\chi_{H_0}$ belongs to $\B_1(\om)$
(cf. \cite[Lemma 1.4]{ShatZaf}).
Put $g_0(x)=\frac{2}{3}\left(\frac{1}{2}-\chi_{H_0}(x)\right)$. Hence $g_0\in\B_1(\om)$,
\begin{equation}\label{1}
\abs{g_0(x)}=\frac{1}{3}\textrm{ for all }x\in\om\textrm{ and}
\end{equation}
\begin{equation}\label{2}
\abs{f(x)-g_0(x)}\mn\frac{2}{3}\textrm{ for all }x\in A.
\end{equation}
Now we mimic the classic proof of the Tietze theorem \cite[p.69]{Eng}.
Repeating the above arguments we obtain a sequence $\left(H_n\right)_{n=0}^\infty$
 of both $\fs$- and $\gd$-subsets of $\om$ such that, for the functions
$g_n\in\B_1(\om)$, $n=0,1,\ldots$, of the form
$g_n(x):=\left(\frac{2}{3}\right)^{n+1}\left(\frac{1}{2}-\chi_{H_n}(x)\right)$
we have
\begin{equation}\label{1'}
\abs{g_n(x)}=\frac{1}{2}\left(\frac{2}{3}\right)^{n+1}\textrm{ for all }x\in\om\textrm{ and}
\end{equation}
\begin{equation}\label{2'}
\abs{f(x)-(g_0(x)+\ldots+g_n(x))}\mn\left(\frac{2}{3}\right)^{n+1}\textrm{ for all }x\in A.
\end{equation}
By (\ref{1'}) and by the fact that the space $\B_1(\om)$ is closed with respect to uniform limits
(see \cite[p.269]{Hausdorff}), the series $\sum_{n=0}^\infty g_n$
converges uniformly on $\om$ to a function $g\in\B_1(\om)$. By (\ref{2'}) we obtain
$g_{\restr A}=f$ and, by (\ref{1'}) again, $\sup_{x\in\om}\abs{g(x)}=1$.
Hence $\ol{f}:=g$ is the required extension of $f$.
\end{proof}

Note that Theorem \ref{MR} has a version for non-negative mappings.
\begin{theorem}\label{PMR}
Let $\om$ be a perfectly normal topological space and let $A$ be a $\gd$-subset of $\om$. Moreover,
let $0\mn f\in B_1^{bd}(A)$ with $\sup_{x\in A}f(x)=1$. Then there is a sequence $(G_n)$ of both
$\fs$- and $\gd$-subsets of $\om$ such that the function
$$
\ol{f}(x):=\frac{1}{2}\sum_{n=0}^\infty\left(\frac{2}{3}\right)^{n+1}\chi_{G_n}(x)  ~\mathrm{(uniform~convergence)}
$$
is a (positive) extension of $f$ with $\sup_{x\in\om}\ol{f}(x)=1$.
\end{theorem}
\begin{proof}
Let $\om$ be a perfectly normal topological space and let $A$ be a $\gd$-subset of $\om$.
Let also $f\in\B_1^{bd}(A)$ be a function such that $0\mn f\mn1$.
Obviously $0\mn2f\mn2$ and $-1\mn2f-1\mn1$. By Theorem \ref{MR}, every function $g=2f-1$ has an extension $\ol{g}\in\B_1(\om)$ such that $-1\mn\ol{g}(x)\mn1$ for every $x\in\om$. Thus

$$
\ol{g}=2\ol{f}-1=\sum_{n=0}^\infty\left(\frac{2}{3}\right)^{n+1}\left(\frac{1}{2}-\chi_{H_n}\right)
$$
and
$$
2\ol{f}=\frac{1}{2}\sum_{n=0}^\infty\left(\frac{2}{3}\right)^{n+1}-
\sum_{n=0}^\infty\left(\frac{2}{3}\right)^{n+1}\chi_{H_n}+1.
$$
Since
$$
\sum_{n=0}^\infty\left(\frac{2}{3}\right)^{n+1}=2,
$$
$$
\ol{f}=1-\frac{1}{2}\sum_{n=0}^\infty\left(\frac{2}{3}\right)^{n+1}\chi_{H_n}=
\frac{1}{2}\sum_{n=0}^\infty\left(\frac{2}{3}\right)^{n+1}-\frac{1}{2}\sum_{n=0}^\infty\left(\frac{2}{3}\right)^{n+1}\chi_{H_n}.
$$
Thus
$$
\ol{f}=\frac{1}{2}\sum_{n=0}^\infty\left(\frac{2}{3}\right)^{n+1}(1-\chi_{H_n})
$$
and
$$
\ol{f}=\frac{1}{2}\sum_{n=0}^\infty\left(\frac{2}{3}\right)^{n+1}\chi_{G_n},
$$
where $G_n=H_n'$ for every $n\in\mN$. That completes the proof.
\end{proof}

\section{Extensions of Baire-alpha functions}\label{EXT2}

Throughout this section the following abbreviations for some classes of subsets of $\om$ will be used:
\begin{description}
  \item[] $\prod^0_1(\om)\ \left[\sum^0_1(\om)\right]$ - closed [open] subsets of $\om$;
  \item[] $\prod_\al^0(\om)\ \left[\sum_\al^0(\om)\right]$ - the multiplicative [additive]
          class $\al$ of Borel subsets of $\om$, $0<\al<\omega_1$;
  \item[] $\Delta^0_\al(\om)$ - the family of Borel subsets of $\om$ of \emph{ambiguous class}
          $\al$ is defined to be $\prod_\al^0(\om)\cap\sum_\al^0(\om)$, $0<\al<\omega_1$;
\end{description}

The Extension Theorem for Baire-alpha functions is a consequence of Theorem \ref{MR} (for Baire-one functions), the general form of the Separation Theorem
(see \cite[Theorem 3.6.11]{Srivastava}) and the following general form of the Lebesgue-Hausdorff result.

\begin{theorem}(Lebesgue-Hausdorff, \cite[p.393]{Kuratowski2})\label{Leb_Haus}
Let $\om$ be a perfectly normal topological space and let a function $f\cn\om\to\mR$ be bounded.
For each ordinal $\al<\omega_1$, $f\in\B_\al(\om)$ if and only if $f^{-1}(F)\in\prod_\al^0(\om)$ for every closed subset $F$ of $\mR$.
\end{theorem}

We are now ready to present our second main result.

\begin{theorem}\label{MRALFA}
Let $\om$ be a perfectly normal topological space and let $A\subset\om$ be a Borel subset of multiplicative class $\alpha<\omega_1$.
Every function $f\in\B_\alpha(A)$ has an extension $\ol{f}\in\B_\alpha(\om)$ such that
$\|f\|_\infty=\|\ol{f}\|_\infty$.

More exactly, if $\sup_{x\in A}\abs{f(x)}=c<\infty$, then there exists a sequence
$\left(H_n\right)$ of Borel ambiguous class $\al$ subsets of $\om$, such that the function
$$
\ol{f}(x):=c\cdot\sum_{n=0}^\infty\left(\frac{2}{3}\right)^{n+1}\left(\frac{1}{2}-\chi_{H_n}(x)\right)
$$
extends $f$ with $\sup_{x\in\om}\abs{\ol{f}(x)}=c$.
\end{theorem}

\begin{proof}
As in the Tietze theorem, it is enough to consider the case $c=1$.
Put $M=f^{-1}\left(\left[-1,-\frac{1}{3}\right]\right)$ and
$N=f^{-1}\left(\left[\frac{1}{3},1\right]\right)$. By the Lebesgue-Hausdorff
theorem, both the (disjoint) sets $M$ and $N$ are of Baire, hence Borel,
multiplicative class $\al$ subsets of $A$. Since $\prod_\al^0(A)=A\cap\prod_\al^0(\om)$
and $A\in\prod_\al^0(\om)$, we have $M,N\subset\prod_\al^0(\om)$. By the
Separation Theorem, there is a set $H_0\in\Delta^0_\al(\om)$
such that $M\subset H_0$ and $N\cap H_0=\emptyset$. By the Lebesgue-Hausdorff
theorem again, the characteristic function $\chi_{H_0}$ belongs to $\B_\al(\om)$
(cf. \cite[Lemma 1.4]{ShatZaf}) and separates the sets $M$ and $N$.
Put $g_0(x)=\frac{2}{3}\left(\frac{1}{2}-\chi_{H_0}(x)\right)$.
Hence $g_0\in\B_\al(\om)$,
\begin{equation}\label{1}
|g_0(x)|=\frac{1}{3}\textrm{ for all }x\in\om\textrm{ and}
\end{equation}
\begin{equation}\label{2}
|f(x)-g_0(x)|\mn\frac{2}{3}\textrm{ for all }x\in A.
\end{equation}
Now we mimic the classical proof of the Tietze theorem \cite[p. 69]{Eng}.
Repeating the above arguments we obtain a sequence $\left(H_n\right)_{n=0}^\infty$
of sets of Borel ambiguous class $\al$ subsets of $\om$ such that, for the functions
$g_n\in\B_\al(\om)$, $n=0,1,\ldots$, of the form
$g_n(x):=\left(\frac{2}{3}\right)^{n+1}\left(\frac{1}{2}-\chi_{H_n}(x)\right)$
we have
\begin{equation}\label{1'}
|g_n(x)|=\frac{1}{2}\left(\frac{2}{3}\right)^{n+1}\textrm{ for all }x\in\om\textrm{ and}
\end{equation}
\begin{equation}\label{2'}
|f(x)-(g_0(x)+\ldots+g_n(x))|\mn \left(\frac{2}{3}\right)^{n+1}\textrm{ for all }x\in A.
\end{equation}
By (\ref{1'}) and \cite[Theorem 7, p. 421]{KurMost}, the series $\sum_{n=0}^\infty g_n$
converges uniformly on $\om$ to a function $g\in\B_\al(\om)$. By (\ref{2'}) we obtain
$g_{\restr A}=f$ and, by (\ref{1'}) again, $\sup_{x\in\om}|g(x)|=1$.
Hence $\ol{f}:=g$ is the required extension of $f$.
\end{proof}

It also has a version for non-negative functions.

\begin{theorem}\label{PMRALFA}
Let $\om$ be a perfectly normal topological space and let $A\subset\om$ be a set of multiplicative Borel class $\alpha<\omega_1$. Moreover, let $0\mn f\in B_\al^{bd}(A)$ with $\sup_{x\in A}f(x)=1$.
Then there is a sequence $(G_n)$ of Borel ambiguous class $\al$ subsets of $\om$ such that the function
$$
\ol{f}(x):=\frac{1}{2}\sum_{n=0}^\infty\left(\frac{2}{3}\right)^{n+1}\chi_{G_n}(x)  ~\mathrm{(uniform~convergence)}
$$
is a (positive) extension of $f$ with $\sup_{x\in\om}\ol{f}(x)=1$.
\end{theorem}

\section{The case of Polish spaces}
We start this section with a theorem of Kuratowski (see \cite[22D]{Kechris}) 
concerning topology refinements for Polish spaces.

\begin{theorem}[Kuratowski]\label{Kur}
Let $(\om,\tau)$ be a Polish space and $A_n\subseteq\om$ be $\Delta^0_\al(\om,\tau)$. 
Then there is a Polish topology $\tau'\supseteq\tau$ such that 
$\tau'\subseteq\sum_{\al}^{0}(\om,\tau)$ and $A_n\in\Delta^0_1(\om,\tau')$ for all $n$.  
\end{theorem}

It turns out that if $\om$ is Polish, our main conclusions can be reached with a few line 
argument using the above theorem.

\begin{theorem}\label{pol1}
  Let $(\om,\tau)$ be a Polish topological space and $A$ be a $\gd$-subset of $\om$.
  Then every function $f\in\B_1(A)$ has an extension $\ol{f}\in\B_1(\om)$ such that
  $\|f\|_\infty=\|\ol{f}\|_\infty$.
\end{theorem}

\begin{proof}
Let $f$ be a Baire-one function on a $\gd$-subset $A$ of $(\om,\tau)$. By Theorem
\ref{Kur} it is possible to refine the topology $\tau$ into $\tau'$ so that 
$f\cn(\om,\tau')\to\mR$ becomes continuous, $A\subset(\om,\tau')$ closed and every
set from $\tau'$ is $\fs$ in $\tau$ (indeed, taking $\left(U_k\right)$ as a countable
basis of $\mR$, this can be done by decomposing $\left(f^{-1}(U_k^c)\cap A\right)^c$
into the union of countably many closed sets which - by Theorem \ref{Kur} - are both
closed and open in $\tau'$). Now we extend continuous $f$ defined on a closed $A$
with the usual Tietze theorem to get $\ol{f}$, which is continuous on $(\om,\tau')$.
In the original domain this extension is Baire-one and has the desired properties.
\end{proof}

With an analogous argumentation we get the extension result for all Baire classes.

\begin{theorem}\label{polalfa}
Let $(\om,\tau)$ be a Polish topological space and let $A\subset\om$ be a Borel subset 
of multiplicative class $\alpha<\omega_1$. Every function $f\in\B_\alpha(A)$ has an extension 
$\ol{f}\in\B_\alpha(\om)$ such that $\|f\|_\infty=\|\ol{f}\|_\infty$.
\end{theorem}

\subsection*{Open Problem}
Is the Kuratowski's refinement technique true in perfectly normal topological spaces?
If yes, it is possible to formulate the above Theorems \ref{pol1} and \ref{polalfa}
in full generality.

\subsection*{Acknowledgements}
\begin{itemize}
\item The author would like to express his gratitude to Professor Marek Wójtowicz for his help in preparation of this manuscript.
  \item The author wishes to express his gratitude to the Reviewer, who kindly noticed
the possibility of significantly simplyfying the proof in the case of the domain
being a Polish space. Section $5$ is based on this observation.
\end{itemize}

\end{document}